 \DeclareFontFamily{U}{wncy}{}
\DeclareFontShape{U}{wncy}{m}{n}{<->wncyr10}{}
\DeclareSymbolFont{mcy}{U}{wncy}{m}{n}
\DeclareMathSymbol{\Sha}{\mathord}{mcy}{"58} 
\theoremstyle{plain}
\newtheorem{theorem}{Theorem}
\newtheorem{lemma}[theorem]{Lemma}
\newtheorem{proposition}[theorem]{Proposition}
\theoremstyle{remark}
\numberwithin{equation}{section}
\numberwithin{paragraph}{section}
\DeclareMathOperator{\ord}{ord}
\DeclareMathOperator{\Sel}{Sel}
\newcommand{\cL}{{\mathcal L}}
\newcommand{\bbA}{{\mathbb A}}
\newcommand{\bbF}{{\mathbb F}}
\newcommand{\bbQ}{{\mathbb Q}}
\newcommand{\bbZ}{{\mathbb Z}}
\newcommand{\GL}{\mathrm{GL}}
\title{Elliptic curves over $\bbQ_{\infty}$ are modular}
\author{Jack A. Thorne\footnote{\textsc{Department of Pure Mathematics and Mathematical Statistics, Wilberforce Road, Cambridge, United Kingdom.} \textit{Email address:} \texttt{thorne@dpmms.cam.ac.uk}}} 
\begin{document}
\maketitle
\begin{abstract}
We show that if $p$ is a prime, then all elliptic curves defined over the cyclotomic $\bbZ_p$-extension of $\bbQ$ are modular.
\end{abstract}

\section{Introduction}

Our goal in this paper is to prove the following theorem:
\begin{theorem}\label{thm_main_theorem}
Let $p$ be a prime, and let $F$ be a number field which is contained in the cyclotomic $\bbZ_p$-extension of $\bbQ$. Let $E$ be an elliptic curve over $F$. Then $E$ is modular.
\end{theorem}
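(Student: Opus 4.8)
The plan is to combine the Taylor--Wiles--Kisin modularity lifting method over totally real fields with a careful analysis of a finite list of ``exceptional'' elliptic curves, the genuinely new point being that this exceptional analysis must be carried out over the entire tower $\bbQ_\infty$ rather than over a single fixed number field.

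\emph{Reduction to a finite layer.} Since $\Gal(\bbQ_\infty/\bbQ) \cong \bbZ_p$, the only subfields of $\bbQ_\infty$ are the layers $\bbQ_n$ (with $[\bbQ_n:\bbQ] = p^n$) together with $\bbQ_\infty$ itself, and each $\bbQ_n$ is totally real. An elliptic curve $E/F$ is defined over a finitely generated, hence finite, subextension of $\bbQ$, so $E$ is already defined over some layer $\bbQ_n$. Because modularity of an elliptic curve over a totally real field propagates along solvable base change, it suffices to prove that $E/\bbQ_n$ is modular. I therefore assume $F = \bbQ_n$ is totally real and work with the compatible system $\{\rho_{E,\ell}\}$ of two-dimensional $\ell$-adic representations of $G_{\bbQ_n}$, the goal being to match it with a Hilbert modular form.

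\emph{Residual modularity and lifting.} For each prime $\ell$ I consider $\bar\rho_{E,\ell}\colon G_{\bbQ_n} \to \GL_2(\bbF_\ell)$, and I try to find $\ell \in \{3,5,7\}$ for which $\bar\rho_{E,\ell}$ is both modular and amenable to a modularity lifting theorem over $\bbQ_n$; such a theorem then upgrades residual modularity to modularity of the whole system, hence of $E$. At $\ell = 3$ the key input is Langlands--Tunnell: the projective image of $\GL_2(\bbF_3)$ is solvable, so whenever $\bar\rho_{E,3}$ is irreducible it is automorphic over $\bbQ_n$, and I then invoke a modularity lifting theorem for the relevant (weight two, finite flat or ordinary) deformations over totally real fields, which needs $\bar\rho_{E,3}|_{G_{\bbQ_n(\zeta_3)}}$ to be irreducible of adequate image and appropriate local conditions above $3$ and $p$. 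When $\bar\rho_{E,3}$ is reducible or has image too small, I pass to $\ell = 5$ (and if necessary $\ell = 7$) by Wiles's $3$--$5$ switch: produce an auxiliary $E'/\bbQ_n$ with $\bar\rho_{E',5} \cong \bar\rho_{E,5}$ but $\bar\rho_{E',3}$ irreducible of large image, prove $E'$ modular by the $\ell=3$ route, and deduce modularity of $\bar\rho_{E,5}$ and hence of $E$; a residually reducible lifting theorem can serve as a further fallback.

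\emph{Exceptional curves over the tower, and the main obstacle.} The strategy fails only for those $E$ whose residual representations at all of $3,5,7$ are simultaneously degenerate (reducible, dihedral, or of otherwise small image). Such $E$ define points on a finite list of modular curves --- fibre products of curves of type $X_0(N)$, $X_{\mathrm{sp}}(\ell)$, $X_{\mathrm{ns}}(\ell)$ parametrizing the forbidden level structures. Over $\bbQ$ and over real quadratic fields these curves were analyzed by Freitas, Le Hung and Siksek and shown to carry only cuspidal or CM points, the latter being modular by CM theory and automorphic induction. The essential new difficulty is to run this analysis over the whole tower: I must show that for each relevant $C/\bbQ$ the set $C(\bbQ_\infty) = \bigcup_n C(\bbQ_n)$ contains no unexpected points. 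Here the $\bbZ_p$-extension structure is decisive, since for curves of genus $\geq 2$ this reduces to finiteness of rational points over $\bbQ_\infty$, controlled through the Mordell--Weil group of the Jacobian: Imai's theorem yields finiteness of the torsion over $\bbQ_\infty$, while Iwasawa-theoretic control of the rank (via Kato's Euler system and Mazur's control theorem in the cyclotomic direction) pins down the remaining $\bbQ_\infty$-points. I expect this uniform control of rational points on the exceptional modular curves across the entire cyclotomic tower to be the main obstacle, as it is precisely the place where the infinitude of $\bbQ_\infty/\bbQ$ must be tamed rather than handled one layer at a time.
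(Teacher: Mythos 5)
Your overall architecture (residual modularity via Langlands--Tunnell, a 3--5 switch, an exceptional locus of modular curves, Iwasawa theory over the tower) matches the paper's, but at the decisive point your proposal leaves the main difficulty standing rather than resolving it. The missing idea is the author's automorphy lifting theorem for residually \emph{dihedral} representations \cite{Tho15}: over a totally real field $F$ with $\sqrt{5} \notin F$, irreducibility of $\overline{\rho}_{E,5}$ alone suffices for modularity --- no absolute irreducibility of $\overline{\rho}_{E,5}|_{G_{F(\zeta_5)}}$ is required. Since the only quadratic field contained in any $\bbQ_\infty$ is $\bbQ(\sqrt{2})$ (and only when $p = 2$), the hypothesis $\sqrt{5} \notin \bbQ_n$ is automatic at every layer, and the exceptional locus collapses to exactly two modular curves, $X(s3,b5)$ and $X(b3,b5)$. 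These are the elliptic curves 15A3 and 15A1 of conductor $15$, of rank zero with $E_i(\bbQ) \cong \bbZ/2\bbZ \oplus \bbZ/4\bbZ$. Your fallback route --- enlarging the degenerate set to $\ell \in \{3,5,7\}$ with a residually reducible lifting theorem as backstop --- recreates the longer Freitas--Le Hung--Siksek list, which contains curves of genus $\geq 2$ such as $X_0(35)$ and $X(s3,s5)$. For those, your proposed Iwasawa-theoretic control is not a proof: bounding the rank of a higher-dimensional modular Jacobian over the cyclotomic tower requires non-vanishing statements for twisted $L$-values that are not known with the needed uniformity in $p$ and $n$; and even granting that $J(\bbQ_\infty)$ is finitely generated (whence $C(\bbQ_\infty) = C(\bbQ_n)$ for some $n$ by injectivity of Abel--Jacobi), you would still have to determine $C(\bbQ_n)$ exactly --- precisely the ``formidable calculations'' of FLHS --- at an unspecified layer, for every prime $p$. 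You correctly flag this as the main obstacle; the point of the paper is that \cite{Tho15} makes the obstacle disappear.

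A second, related gap: even for genus-one exceptional curves, finiteness is not the right target, since a single unidentified point in some layer could a priori correspond to a non-modular elliptic curve. One needs the exact equality $E_i(\bbQ_\infty) = E_i(\bbQ)$, after which cyclic base change from $\bbQ$ finishes the argument; Imai's theorem gives finiteness of torsion but not this equality. The paper computes the torsion exactly from Galois images: $\rho_{E_1,l}$ is surjective for every $l \geq 3$, forcing $E_1(\bbQ_\infty)[l^\infty] = 0$, while the Rouse--Zureick-Brown description of the $2$-adic image (the preimage of an explicit order-$16$ subgroup of $\GL_2(\bbZ/8\bbZ)$) handles $l = 2$, with a separate computation of the determinant-$\pm 1$ subgroup when $p = 2$. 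The rank is then killed case by case in the reduction type at $p$, the uniform input being that $L(E_1,1)/\Omega_{E_1} = 1/8$ is a $p$-adic unit for $p \geq 3$: Greenberg's criterion plus triviality of $\Sel_p(E_1)$ via Kato/Skinner--Urban in the good ordinary case (where $a_p \equiv 1 \bmod p$ is excluded because the rational $8$-torsion would force $8p \mid \#E_1(\bbF_p)$, violating the Hasse bound), Kurihara in the supersingular case, Skinner's multiplicative-reduction theorem together with Greenberg's discussion (including the $\cL$-invariant condition at $p = 5$) at the bad primes $p \in \{3,5\}$, and Greenberg's $\lambda = 0$ computation at $p = 2$. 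None of this exact-torsion computation or case analysis appears in your sketch in checkable form, so as written the proposal identifies the right strategy but does not constitute a proof.
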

We recall that an elliptic curve $E$ over a number field $F$ is said to be modular if there is a regular algebraic automorphic representation $\pi$ of $\GL_2(\bbA_F)$ which has the same $L$-function as $E$. This is one of several equivalent formulations; if $F$ is totally real, then $\pi$ will be generated by vectors which can be interpreted as Hilbert modular forms of parallel weight 2.%We note that this statement depends on the field $F$ over which $E$ is defined. If we extend scalars to an extension $F'/F$, then the modularity of $E$ over this larger extension is predicted by Langlands' functoriality conjectures, but is not automatic. However, if the extension $F'/F$ is cyclic, this modularity follows by the existence of cyclic base change for $\GL_2$ \cite{Lan80}.

The modularity of a given elliptic curve $E$ has many useful consequences. It implies that the $L$-function of $E$ has an analytic continuation to the whole complex plane, allowing one to formulate the Birch--Swinnerton-Dyer conjecture for $E$ unconditionally. When the order of vanishing of the $L$-function at the point $s = 1$ is at most 1, this conjecture is almost known in many cases \cite{Zha01}.

The modularity of all elliptic curves over $\bbQ$ has been known since work of Wiles and Breuil, Conrad, Diamond, and Taylor \cite{Wil95}, \cite{Tay95}, \cite{Con99}, \cite{Bre01}. Attempts to generalize this work to fields other than $\bbQ$ have all followed Wiles' original strategy: one first proves automorphy lifting theorems. For curves satisfying the conditions of these theorems, one attempts to verify the residual automorphy. One then hopes that curves not satisfying the conditions of these theorems can be enumerated and checked explicitly to be modular.

This strategy has recently been used by Freitas, Le Hung, and Siksek to establish the modularity of all elliptic curves over real quadratic fields \cite{Fre13}. They use automorphy lifting theorems of Kisin to reduce the result to a calculation of real quadratic points on a finite list of modular curves; they then carry out some formidable calculations to check that these points also correspond to modular elliptic curves. 

In this paper, we use an automorphy lifting theorem established recently by the author which extends the domain of validity of Kisin's theorems \cite{Tho15}, and reduces the modularity of elliptic curves over a given totally real field $F$ with $\sqrt{5} \not\in F$ to checking the modularity of elliptic curves corresponding to rational points on two modular curves (one of which is $X_0(15)$, and the other of which is a genus 1 curve isogenous to $X_0(15)$). We then use Iwasawa theory to check that these curves acquire no new rational points in any cyclotomic $\bbZ_p$-extension of $\bbQ$.

\subsection*{Acknowledgements} This work was carried out while the author served as a Clay Research Fellow. I would like to thank James Newton and John Coates for useful conversations.

\section{The proof}

If $F$ is a number field, we write $G_F$ for its absolute Galois group (relative to a fixed choice of algebraic closure). If $E$ is an elliptic curve defined over a number field $F$, and $p$ is a prime, then we write $\rho_{E, p} : G_F \to \GL_2(\bbZ_p)$ for the associated representation of the absolute Galois group of $F$ on the $p$-adic Tate module of $E$, and $\overline{\rho}_{E, p} : G_F \to \GL_2(\bbF_p)$ for its reduction modulo $p$. Both of these representations are defined up to conjugation by elements of $\GL_2(\bbZ_p)$. We write $\zeta_p$ for a choice of primitive $p^\text{th}$ root of unity in the algebraic closure of $F$.
\begin{theorem}\label{thm_modularity_lifting_results}
Let $E$ be an elliptic curve over a totally real number field $F$, and suppose that (at least) one of the following is true:
\begin{enumerate}
\item The representation $\overline{\rho}_{E, 3}|_{G_{F(\zeta_3)}}$ is absolutely irreducible. 
\item $\sqrt{5} \not\in F$, and $\overline{\rho}_{E, 5}$ is irreducible.
\end{enumerate}
Then $E$ is modular.
\end{theorem}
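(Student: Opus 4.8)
The plan is to apply the automorphy lifting theorem of \cite{Tho15} at the prime $p = 3$ in the first case and at $p = 5$ in the second, in each case after first establishing the residual automorphy of $\overline{\rho}_{E,p}$. The lifting theorem supplies the Taylor--Wiles--Kisin patching machinery, so the two hypotheses in the statement are precisely the \emph{adequacy} (big image) conditions that allow this machinery to run; the local conditions at the places above $p$ are automatic, because the $p$-adic Tate module of an elliptic curve is potentially Barsotti--Tate (in the potentially good reduction case) or ordinary and semistable (in the multiplicative reduction case).

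For case (1) I would obtain residual automorphy of $\overline{\rho}_{E,3}$ from the Langlands--Tunnell theorem. The image of $\overline{\rho}_{E,3}$ lies in $\GL_2(\bbF_3)$, which is solvable, and a fixed embedding $\GL_2(\bbF_3) \hookrightarrow \GL_2(\bbC)$ (realized over $\bbZ[\sqrt{-2}]$) turns $\overline{\rho}_{E,3}$ into a continuous, odd, two-dimensional complex representation of $G_F$ whose projective image lies in $\PGL_2(\bbF_3) \cong S_4$. Langlands--Tunnell shows this Artin representation is automorphic, and reduction modulo $3$ recovers the automorphy of $\overline{\rho}_{E,3}$. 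Since $\overline{\rho}_{E,3}|_{G_{F(\zeta_3)}}$ is absolutely irreducible by hypothesis, the residual image is adequate, and \cite{Tho15} then lifts this to automorphy of $\rho_{E,3}$, whence $E$ is modular.

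For case (2) the Langlands--Tunnell theorem is unavailable, since $\GL_2(\bbF_5)$ is not solvable, so I would establish residual automorphy of $\overline{\rho}_{E,5}$ by the Wiles ``$3$--$5$ switch''. Consider the twist of the modular curve $X(5)$ parametrizing elliptic curves $E'/F$ equipped with a symplectic isomorphism $E'[5] \cong E[5]$ of mod-$5$ Galois modules. This curve has genus $0$, and the point corresponding to $E$ itself exhibits it as $\bbP^1_F$, so it has infinitely many $F$-rational points. Applying Ekedahl's form of the Hilbert irreducibility theorem over $F$, I would choose such a point giving an auxiliary elliptic curve $E'$ with $\overline{\rho}_{E',5} \cong \overline{\rho}_{E,5}$ and with $\overline{\rho}_{E',3}|_{G_{F(\zeta_3)}}$ absolutely irreducible. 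By case (1), $E'$ is modular, so $\overline{\rho}_{E,5} \cong \overline{\rho}_{E',5}$ is residually automorphic. The hypotheses $\sqrt{5} \notin F$ and irreducibility of $\overline{\rho}_{E,5}$ together guarantee that $\overline{\rho}_{E,5}|_{G_{F(\zeta_5)}}$ has adequate image; this is exactly the role of the assumption $\sqrt{5} \notin F$, which prevents the residual image from degenerating over the cyclotomic field $F(\zeta_5) \ni \sqrt{5}$. Thus \cite{Tho15} applies at $p = 5$ to give automorphy of $\rho_{E,5}$, and again $E$ is modular.

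The main obstacle is the $3$--$5$ switch: one must arrange the auxiliary curve $E'$ to satisfy simultaneously the mod-$5$ matching, the mod-$3$ adequacy condition of case (1), and acceptable local behaviour at $3$ and $5$, all while remaining defined over the totally real field $F$. This requires checking that the loci where $\overline{\rho}_{E',3}$ has small image, or where $E'$ degenerates locally, are thin in the sense needed for Hilbert irreducibility, and it is the step most sensitive to working over a general $F$ rather than over $\bbQ$. The verification of adequacy under the hypothesis $\sqrt{5} \notin F$ — rather than the patching itself, which is imported wholesale from \cite{Tho15} — is the essential structural point distinguishing this argument from the classical case.
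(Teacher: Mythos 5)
Your case (1) matches the paper's route (Langlands--Tunnell for residual automorphy, then lifting --- though the paper invokes Kisin's lifting theorems via \cite[Theorem 3]{Fre13} rather than \cite{Tho15} for this step), and your use of the $3$--$5$ switch is also how the paper handles part of case (2). But your case (2) contains a genuine gap at its central claim: it is \emph{false} that ``$\sqrt{5} \notin F$ and irreducibility of $\overline{\rho}_{E,5}$ together guarantee that $\overline{\rho}_{E,5}|_{G_{F(\zeta_5)}}$ has adequate image.'' The standard counterexample is the residually dihedral case: $\overline{\rho}_{E,5}$ can be irreducible over $F$ while being induced, $\overline{\rho}_{E,5} \cong \Ind_{G_K}^{G_F} \chi$ for a character $\chi$ of $G_K$ with $K$ a quadratic subfield of $F(\zeta_5)$ --- for instance $K = F(\sqrt{5})$, which is a nontrivial extension precisely \emph{because} $\sqrt{5} \notin F$. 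Then $\overline{\rho}_{E,5}|_{G_{F(\zeta_5)}}$ is reducible, the image fails even the Taylor--Wiles condition, and no amount of $3$--$5$ switching repairs this: the obstruction sits at the lifting step at $p=5$, not at residual automorphy (which in the dihedral case is anyway automatic by automorphic induction of $\chi$). So your argument proves only the weaker statement in which one additionally assumes $\overline{\rho}_{E,5}|_{G_{F(\zeta_5)}}$ absolutely irreducible --- which is the classical Kisin-type result already used in \cite{Fre13}.

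The paper's proof splits case (2) accordingly: when $\overline{\rho}_{E,5}|_{G_{F(\zeta_5)}}$ is absolutely irreducible, it reduces to case (1) by the $3$--$5$ switch as you describe; in the remaining (residually dihedral) case it cites \cite[Theorem 1.1]{Tho15} directly, and this is the entire point of that reference, whose title is ``Automorphy of some residually dihedral Galois representations.'' You have thus inverted the role of the hypothesis $\sqrt{5} \notin F$: it does not \emph{rule out} degenerate image over $F(\zeta_5)$, but rather ensures that when degeneration happens, the induction comes from the totally real quadratic extension $F(\sqrt{5})/F$, which is what makes Thorne's method work in that case. The distinction matters downstream: with only your weaker version of (2), non-modular curves could also arise from mod-$5$ images lying in the normalizer of a Cartan subgroup of $\GL_2(\bbF_5)$, so Lemma \ref{lem_modularity_criterion} would need a longer list of modular curves (as in \cite{Fre13} over real quadratic fields), and the paper's reduction to just $X(s3,b5)$ and $X(b3,b5)$ would fail.
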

\begin{proof}
The first part follows from results of Kisin and Langlands--Tunnell, see \cite[Theorem 3]{Fre13}. The second part, in the case where $\overline{\rho}_{E, 5}$ remains absolutely irreducible on restriction to $G_{F(\zeta_5)}$, is a consequence of the first part and the 3--5 switch of Wiles, described in \emph{loc. cit.}. The second part in the remaining case is \cite[Theorem 1.1]{Tho15}. 
\end{proof}
Following \cite[\S 2.2]{Fre13}, we introduce modular curves $X(s3, b5)$ and $X(b3, b5)$. These are smooth, projective curves over $\bbQ$. Loosely speaking, for a number field $K$ the non-cuspidal $K$-points of $X(s3, b5)$ correspond to isomorphism classes of elliptic curves $E$ such that $\overline{\rho}_{E, 3}(G_K)$ is contained in the normalizer of a split Cartan subgroup of $\GL_2(\bbF_3)$ and $\overline{\rho}_{E, 5}(G_K)$ is contained in a Borel subgroup of $\GL_2(\bbF_5)$ (i.e.\ $E$ has a $K$-rational $5$-isogeny). The non-cuspidal $K$-points of $X(b3, b5)$ correspond to isomorphism classes of curves which are endowed with a $K$-rational $15$-isogeny. 
\begin{lemma}\label{lem_modularity_criterion}
Let $F$ be a totally real field such that $\sqrt{5} \not\in F$. 
\begin{enumerate}
\item If $E$ is an elliptic curve over $F$ which is not modular, then $E$ determines an $F$-rational point of one of the curves $X(s3, b5)$, $X(b3, b5)$.
\item If $F / \bbQ$ is cyclic and $X(s3, b5)(F) = X(s3, b5)(\bbQ)$, $X(b3, b5)(F) = X(b3, b5)(\bbQ)$, then all elliptic curves over $F$ are modular.
\end{enumerate}
\end{lemma}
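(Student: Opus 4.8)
The plan is to derive both statements from Theorem~\ref{thm_modularity_lifting_results}, combining the subgroup structure of $\GL_2(\bbF_3)$ with cyclic base change; total reality will play a decisive role in part~(1).

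For part~(1), suppose $E/F$ is not modular, so that both hypotheses of Theorem~\ref{thm_modularity_lifting_results} fail. Since $\sqrt{5}\notin F$, failure of the second means $\overline{\rho}_{E,5}$ is reducible, i.e.\ $E$ has an $F$-rational $5$-isogeny; this supplies the ``$b5$'' condition at whatever point I produce, so I would focus on the prime $3$. Write $\Gamma=\overline{\rho}_{E,3}(G_F)$. The Weil pairing identifies $\det\overline{\rho}_{E,3}$ with the mod $3$ cyclotomic character, so $G_{F(\zeta_3)}=\ker(\det\circ\overline{\rho}_{E,3})$ and hence $A:=\overline{\rho}_{E,3}(G_{F(\zeta_3)})=\Gamma\cap\SL_2(\bbF_3)$. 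The key group-theoretic input is that the only subgroups of $\SL_2(\bbF_3)$ acting absolutely irreducibly are $\SL_2(\bbF_3)$ and its quaternion Sylow $2$-subgroup $Q_8$, while every other subgroup is cyclic. Failure of the first hypothesis therefore forces $A$ to be cyclic, hence abelian and normal of index dividing $2$ in $\Gamma$.

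I would then analyze the $\Gamma$-action on the eigenlines of $A$ over $\overline{\bbF}_3$. Unless $A\cong\bbZ/4\bbZ$, one checks that $\overline{\rho}_{E,3}$ stabilizes an $\bbF_3$-rational line---when $A$ is scalar this uses that $\Gamma$ is then abelian and contains the regular semisimple involution $\overline{\rho}_{E,3}(c)$ attached to a complex conjugation $c\in G_F$---so $\Gamma$ lies in a Borel subgroup and $E$ gives a point of $X(b3,b5)$. The remaining case $A\cong\bbZ/4\bbZ$ is the main obstacle: the two eigenlines of $A$ are conjugate over $\bbF_9$, so $\Gamma$ lies in the normalizer $N_{ns}$ of a non-split Cartan subgroup, a configuration that neither modular curve parametrizes. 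This is exactly where total reality is essential. The involution $\overline{\rho}_{E,3}(c)$ has trace $0$ and determinant $-1$, hence the distinct $\bbF_3$-rational eigenvalues $\pm1$; no such element lies in a non-split Cartan, so $\Gamma$ is not the non-split Cartan itself. As $N_{ns}$ is the semidihedral Sylow $2$-subgroup of $\GL_2(\bbF_3)$ and $\det\Gamma=\bbF_3^\times$ gives $[\Gamma:A]=2$, the group $\Gamma$ is a maximal subgroup of $N_{ns}$ containing a determinant $(-1)$ involution; the unique such subgroup is a conjugate of the normalizer $N_s$ of a split Cartan. Hence $\Gamma=N_s$, and $E$ gives a point of $X(s3,b5)$.

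For part~(2), suppose again that $E/F$ is not modular. By part~(1) it determines an $F$-point $P$ of $X(s3,b5)$ or $X(b3,b5)$, which by hypothesis lies in the corresponding set of $\bbQ$-points. Composing with the $\bbQ$-rational forgetful map to the $j$-line, and noting that $P$ is non-cuspidal because $E$ is a genuine elliptic curve, I get $j(E)=j(P)\in\bbQ$. It then suffices to show that every elliptic curve over $F$ with rational $j$-invariant is modular. Fixing $E_0/\bbQ$ with $j(E_0)=j(E)$, the curve $E$ is a twist of the base change $(E_0)_F$ by a finite-order character of $G_F$ (a quadratic twist when $j(E)\neq 0,1728$); since $F/\bbQ$ is cyclic, Langlands cyclic base change makes $(E_0)_F$ modular, and twisting by a finite-order character preserves modularity (for $j(E)\in\{0,1728\}$ one may instead invoke the modularity of CM elliptic curves). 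This contradicts the assumed non-modularity of $E$, so no such curve exists and the proof is complete.
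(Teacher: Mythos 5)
Your proof is correct, and its skeleton coincides with the paper's: part (1) is Theorem \ref{thm_modularity_lifting_results} plus control of the mod~$3$ image using total reality, and part (2) is part (1) plus modularity over $\bbQ$, cyclic base change, and twisting. The genuine difference is that where the paper disposes of part (1) by citing \cite[Proposition 4.1]{Fre13} (which treats $p \in \{3,5,7\}$ uniformly), you give a self-contained proof of exactly the $p=3$ case, and your group theory checks out: $A = \Gamma \cap \SL_2(\bbF_3)$ has index $2$ in $\Gamma$ since $F$ is (totally) real, the absolutely reducible subgroups of $\SL_2(\bbF_3)$ are indeed precisely the cyclic ones ($Q_8$ and $\SL_2(\bbF_3)$ being the only absolutely irreducible ones), the cases $A \subseteq \{\pm 1\}$ and $A \supseteq C_3$ visibly land in a Borel, and in the delicate case $A \cong \bbZ/4\bbZ$ your use of $\overline{\rho}_{E,3}(c)$ (an involution of determinant $-1$, hence trace $0$ with rational eigenvalues $\pm 1$) to rule out the cyclic and quaternion maximal subgroups of the semidihedral group $N_{ns}$ of order $16$ is correct; one can even confirm the conclusion directly, since the surviving dihedral group of order $8$ must stabilize the pair of rational eigenlines of $\overline{\rho}_{E,3}(c)$ (its conjugates in $\Gamma$ are $\pm\overline{\rho}_{E,3}(c)$, which share those eigenlines), so $\Gamma$ really is the normalizer of a split Cartan. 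Likewise your part (2) --- passing to the $j$-line, noting $j(E) \in \bbQ$, realizing $E$ as a finite-order twist of the base change of some $E_0/\bbQ$, applying Langlands' cyclic base change (iterated through prime-degree layers), and handling $j \in \{0, 1728\}$ via CM modularity --- is the standard unwinding of the paper's one-line appeal to modularity over $\bbQ$ and \cite{Lan80}; the CM caveat you include is a detail the paper leaves implicit. In short: the paper's citations buy brevity and the more general statement of \cite{Fre13}, while your version buys a complete, verifiable argument for precisely the input this lemma needs.
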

\begin{proof}
The first part is a consequence of Theorem \ref{thm_modularity_lifting_results} and \cite[Proposition 4.1]{Fre13}. The second part is a consequence of the first part, the modularity of all elliptic curves over $\bbQ$, and cyclic base change for $\GL_2$ \cite{Lan80}.
\end{proof}
We can make these modular curves explicit:
\begin{proposition}
The curve $X(b3, b5)$ is isomorphic over $\bbQ$ to the elliptic curve
\[ E_1 : y^2 + xy + y = x^3 + x^2 - 10 x - 10 \]
of Cremona label 15A1. The curve $X(s3, b5)$ is isomorphic over $\bbQ$ to the elliptic curve
\[ E_2 : y^2 + xy + y = x^3 + x^2 - 5 x + 2 \]
of Cremona label 15A3. Both curves have group of rational points isomorphic to $\bbZ / 2 \bbZ \oplus \bbZ / 4 \bbZ$. They are related by an isogeny of degree 2.
\end{proposition}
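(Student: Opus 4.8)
The plan is to identify each modular curve with an explicit elliptic curve and then match it against the small and completely understood isogeny class of curves of conductor $15$. First I would note that $X(b3,b5)$ is, by its moduli description, the classical curve $X_0(15)$: a non-cuspidal point records a rational $3$-isogeny together with a rational $5$-isogeny, and since $3$ and $5$ are coprime the composite is a cyclic rational $15$-isogeny, which is exactly the level datum of $\Gamma_0(15)$. As $X_0(15)$ has genus $1$ and rational cusps, taking one cusp as origin realizes it as an elliptic curve over $\bbQ$, and it is classical (and recorded in Cremona's tables) that this curve is $15A1$; I would pin down the displayed Weierstrass model either by citing this computation or by expanding a basis of weight-$2$ forms for $\Gamma_0(15)$.

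For $X(s3,b5)$ the approach is to produce an explicit model and recognize it in the same isogeny class. I would compute it as the normalization of the fibre product over the $j$-line of $X_0(5)$ with the modular curve attached to the normalizer of a split Cartan subgroup of $\GL_2(\bbF_3)$ (using, if convenient, the description of the latter as an Atkin--Lehner quotient of a curve of level $9$). This produces a genus-$1$ curve over $\bbQ$; after exhibiting a rational point one obtains an elliptic curve, whose conductor I would compute to be $15$ and whose invariants place it in the conductor-$15$ isogeny class, identifying it as $15A3$. This matches the explicit computation of Freitas, Le Hung, and Siksek \cite{Fre13} which this discussion follows, and against which the model can be cross-checked.

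The remaining assertions are then bookkeeping within that isogeny class. The curves $15A1$ and $15A3$ have rank $0$, and the displayed models each carry full rational $2$-torsion together with a rational point of order $4$, so that the group of rational points of each is $\bbZ/2\bbZ \oplus \bbZ/4\bbZ$; this may equally be read from Cremona's tables. The degree-$2$ isogeny asserted in the statement is precisely the one linking $15A1$ and $15A3$ recorded in the isogeny graph of the class. The step I expect to be the main obstacle is producing the explicit model of $X(s3,b5)$ together with a rational base point: the normalizer-of-split-Cartan structure at $3$ is less standard than the Borel structure, and care is needed to realize the level datum as an honest quotient defined over $\bbQ$ and to verify that a rational point exists, so that one genuinely obtains an elliptic curve rather than merely a genus-$1$ torsor. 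Once that model is in hand, the identification with $15A3$ and the reading-off of the arithmetic invariants are routine.
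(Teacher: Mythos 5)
Your proposal is correct and is essentially the paper's argument: the paper's entire proof is the citation ``See \cite[Lemmas 5.6, 5.7]{Fre13}'', and what you sketch --- identifying $X(b3,b5)$ with $X_0(15) = 15\mathrm{A}1$, realizing $X(s3,b5)$ via the split-Cartan-normalizer structure at $3$ (equivalently the Atkin--Lehner quotient of level $9$) fibred against $X_0(5)$, and reading the torsion groups and the $2$-isogeny from Cremona's tables --- is precisely the content of those lemmas of Freitas, Le Hung, and Siksek, which you also cite as the cross-check. The only difference is one of presentation: you reconstruct the computation that the paper delegates wholesale to \cite{Fre13}, and your flagged obstacle (exhibiting a rational base point, e.g.\ a rational cusp, on $X(s3,b5)$ so that it is an elliptic curve rather than a mere genus-$1$ torsor) is exactly the point handled in the cited lemmas.
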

\begin{proof}
See \cite[Lemmas 5.6, 5.7]{Fre13}. 
\end{proof}
By Lemma \ref{lem_modularity_criterion}, the proof of Theorem \ref{thm_main_theorem} is therefore reduced to the following result:
\begin{theorem}
Let $p$ be a prime, let $i \in \{1, 2 \}$, and let $\bbQ_{\infty}$ denote the cyclotomic $\bbZ_p$-extension of $\bbQ$. Then $E_i(\bbQ_{\infty}) = E_i(\bbQ)$.
\end{theorem}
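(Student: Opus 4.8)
The plan is to treat the Mordell--Weil rank and the torsion subgroup of $E_i$ over $\bbQ_\infty$ separately, writing $\bbQ_\infty = \bigcup_n \bbQ_n$ with $\Gal(\bbQ_n/\bbQ) \cong \bbZ/p^n\bbZ$ and $\Gamma = \Gal(\bbQ_\infty/\bbQ) \cong \bbZ_p$. I will use throughout that $\bbQ_\infty/\bbQ$ is an abelian pro-$p$ extension, ramified only at $p$ and totally ramified there. The inputs from the curves themselves are: each $E_i$ has $E_i(\bbQ) \cong \bbZ/2\bbZ \oplus \bbZ/4\bbZ$, rank $0$, and trivial Tate--Shafarevich group (so $\Sel_{p^\infty}(E_i/\bbQ) = 0$ for all $p$); each has good reduction outside $\{3,5\}$ and, since the conductor $15$ is squarefree, multiplicative (hence ordinary) reduction at $3$ and $5$. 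A short point count gives $a_2(E_i) = -1$, so $E_i$ also has good \emph{ordinary} reduction at $2$, with $\#\widetilde{E}_i(\bbF_2) = 4$; this will be crucial.

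For the torsion I would first dispose of everything of odd order. Recall that $E_i[2]$ is already rational. If $P \in E_i(\bbQ_\infty)$ has exact odd order $\ell$, then since $\mu_\ell \not\subseteq \bbQ_\infty$ the Weil pairing forces the $G_\bbQ$-span of the orbit of $P$ to be a Galois-stable line, i.e.\ $\overline{\rho}_{E_i, \ell}$ is reducible; for these curves reducibility at odd $\ell$ corresponds to a rational $\ell$-isogeny and so occurs only for $\ell \in \{3,5\}$. The action on the line is a character $\chi$ of order dividing $\ell - 1$ which, for $P$ to be defined over $\bbQ_\infty$, must factor through the pro-$p$ group $\Gamma$. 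When $p \nmid \ell - 1$ this already forces $\chi = 1$; the only remaining relevant case is $p = 2$ with $\ell \in \{3,5\}$, where $\chi$ is unramified at $2$ (good reduction) while $\Gamma$ is totally ramified at $2$, again giving $\chi = 1$. Thus every odd-order point lies in $E_i(\bbQ)$, which has no odd torsion, and since an odd-prime-power torsion point has a nonzero multiple of prime order, there is no new odd torsion. The $2$-power torsion cannot grow when $p$ is odd either, because $E_i[2^\infty]$ generates a pro-$2$ extension of $\bbQ$ (as $E_i[2]$ is rational), which meets the pro-$p$ field $\bbQ_\infty$ in $\bbQ$. This isolates the main obstacle on the torsion side: the case $p = 2$. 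Here I would localize at the unique prime above $2$ and bound $E_i[2^\infty]$ over the totally ramified $\bbZ_2$-extension $\bbQ_{2,\infty}$ of $\bbQ_2$, using good ordinary reduction and the connected--\'etale sequence $0 \to \widehat{E}_i[2^\infty] \to E_i[2^\infty] \to E_i[2^\infty]^{\mathrm{et}} \to 0$: the \'etale quotient is unramified, so it gains nothing over $\bbQ_{2,\infty}$ beyond $\widetilde{E}_i(\bbF_2)[2^\infty]$, while inertia acts on the connected part through the cyclotomic character and $\mu_4 \not\subseteq \bbQ_{2,\infty}$ bounds its contribution; the Weil pairing over the totally real field $\bbQ_\infty$ simultaneously prevents $E_i[4]$ from becoming rational. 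Assembling these bounds should give $E_i[2^\infty](\bbQ_\infty) = \bbZ/2\bbZ \oplus \bbZ/4\bbZ$, with no growth.

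For the rank I would turn to Iwasawa theory. Since $\Sel_{p^\infty}(E_i/\bbQ) = 0$ and $E_i$ is modular, Kato's Euler system shows that $X := \Sel_{p^\infty}(E_i/\bbQ_\infty)^\vee$ is a torsion $\Lambda$-module whose characteristic ideal divides the $p$-adic $L$-function $L_p(E_i)$ (for the finitely many supersingular $p$ one uses Kobayashi's signed Selmer groups in place of the ordinary ones). To conclude $\rank E_i(\bbQ_n) = 0$ for every $n$ it suffices that $X_{\Gamma_n}$ be finite for all $n$, which by the divisibility reduces to the nonvanishing $L_p(E_i)(\zeta_{p^k} - 1) \neq 0$ for all $k \geq 0$, i.e.\ to $L(E_i, \chi, 1) \neq 0$ for every character $\chi$ of $\Gamma$ of $p$-power order. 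Rohrlich's theorem supplies this for all but finitely many $\chi$; equivalently, for all but finitely many $p$ one checks directly that $\mu = \lambda = 0$, so that $L_p(E_i)$ is a unit in $\Lambda$ and no rank growth can occur. The crux is to handle the residual cases uniformly in $p$: the finite exceptional set of small-conductor characters must be checked by hand, the exceptional (trivial) zero at any split multiplicative prime $p \in \{3,5\}$ must be accounted for via Mazur--Tate--Teitelbaum so that it does not masquerade as rank, and the prime $p = 2$ is again delicate because $2$ divides $L(E_i,1)/\Omega_{E_i}$ (here $\mu$ and $\lambda$ must be pinned down explicitly). Establishing the nonvanishing at these few primes, together with the local $2$-power torsion bound above, is where the real work lies; everything else follows from the clean character-theoretic and Iwasawa-theoretic arguments.
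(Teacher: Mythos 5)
Your torsion analysis is essentially sound, and at the one delicate prime $p=2$ it takes a genuinely different route from the paper: the paper quotes Rouse--Zureick-Brown's computation of the full image $\rho_{E,2}(G_\bbQ) \subset \GL_2(\bbZ_2)$ and verifies $(\bbZ/8\bbZ \times \bbZ/8\bbZ)^H = (\bbZ/8\bbZ \times \bbZ/8\bbZ)^G$ by a Magma computation, whereas your local connected--\'etale bound over the totally ramified $\bbZ_2$-extension of $\bbQ_2$ (the \'etale quotient contributes at most $\widetilde{E}(\bbF_2)$, of order $4$; inertia in $G_{\bbQ_{2,\infty}}$ still maps onto $\{\pm 1\}$ under the cyclotomic character, so the connected part contributes at most $\widehat{E}[2]$; hence $\#E[2^\infty](\bbQ_{2,\infty}) \leq 8 = \#E(\bbQ)[2^\infty]$) does work and avoids the appeal to the $2$-adic images database. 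One claim in the odd-torsion step is unjustified and in fact false as a statement about $E_1, E_2$: reducibility of $\overline{\rho}_{E_i,\ell}$ does \emph{not} ``occur only for $\ell \in \{3,5\}$'' because of some rational isogeny --- you appear to have transferred the $15$-isogeny from the moduli interpretation of $X(b3,b5)$ to the curve itself. In reality $\rho_{E_i,\ell}$ is surjective for \emph{every} $\ell \geq 3$ (the paper deduces this from Serre's Proposition 21 and the minimal discriminant $15^4$), so the odd-reducible case is vacuous; without such an input, Mazur's isogeny theorem only restricts $\ell$ to a list including $7, 11, 13, \dots$, and your character argument as written (which uses $\ell - 1 \in \{2,4\}$ and good reduction at $2$) does not close those cases. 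The repair is easy --- either cite surjectivity, or note that for any odd $\ell$ the character on the stable line has trivial restriction to $I_p$ (good or multiplicative reduction at $p$), while any nontrivial character of $\Gamma$ is ramified at $p$ --- but some such argument must be supplied.

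The rank section, by contrast, has a genuine gap: you stop exactly where the theorem lives, declaring $p \in \{2,3,5\}$ and the exceptional characters to be ``where the real work lies'' without doing that work, and the route you propose has two structural problems even as a plan. First, at the split multiplicative prime $p = 5$, Kato's divisibility plus nonvanishing of $L(E,\chi,1)$ can never give finiteness of $X_\Gamma$: the $p$-adic $L$-function has a \emph{forced} trivial zero at the trivial character, so $\mathrm{char}(X) \mid L_p(E)$ is consistent with $T \mid \mathrm{char}(X)$. One needs input in the opposite (lower-bound) direction --- the paper uses Skinner's Theorem C to conclude $\Sel_5(E)=0$ --- combined with Greenberg's exceptional-zero analysis and the fact that $\cL = \log_p q_E/\ord_p q_E$ lies in $p\bbZ_p^\times$; merely invoking Mazur--Tate--Teitelbaum names the phenomenon without resolving it. Second, your assertion that Rohrlich's theorem is ``equivalently'' the statement that $\mu = \lambda = 0$ for all but finitely many $p$ conflates a fixed-$p$ result (finitely many exceptional characters for each $p$, with an ineffective exceptional set) with a uniform-in-$p$ one, and you give no method for checking the residual characters by hand. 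The paper's argument shows such nonvanishing considerations are avoidable altogether: for every good ordinary $p \geq 3$, $L(E,1)/\Omega_E = 1/8$ is a $p$-adic unit, so Kato already gives $\Sel_p(E) = 0$, and $a_p \not\equiv 1 \bmod p$ holds because otherwise $8p \mid \#E(\bbF_p)$ (the rational torsion of order $8$ injects), contradicting the Hasse bound; Greenberg's Proposition 3.8 then yields finiteness of $E(\bbQ_\infty)$ uniformly, with no exceptional set. Supersingular $p$ is handled by Kurihara, and $p=2$ by Greenberg's explicit computation that $\lambda_2 = 0$ --- a concrete fact you would need to reprove, not merely flag (note also that $L(E,1)/\Omega_E = 1/8$, so ``$2$ divides $L(E,1)/\Omega_E$'' is backwards). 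Until you supply substitutes for these inputs at $p = 2, 3, 5$ and for the exceptional-character check, the proposal is a strategy outline rather than a proof.
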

\begin{proof}
We just treat the case of $E = E_1$, since the other case is extremely similar (because $E_1, E_2$ are related by a 2-isogeny). We first show $E(\bbQ_{\infty})^\text{tors} = E(\bbQ)$ by studying the Galois representations of $E$. It is known that the Galois representation $\rho_{E, l} : G_\bbQ \to \GL_2(\bbZ_l)$ is surjective for all primes $l \geq 3$. (For example, this can be shown using \cite[Proposition 21]{Ser72} and the fact that $E$ has minimal discriminant $15^4$.) It follows that for any prime $l \geq 3$, we have $E(\bbQ_{\infty})[l^\infty] = 0$. To address the 2-power torsion, we appeal to the calculation by Rouse and Zureick-Brown of the image $\rho_{E, 2}(G_\bbQ)$ \cite{Zur15}. They show that this subgroup of $\GL_2(\bbZ_2)$ is the pre-image of the subgroup $G \subset \GL_2(\bbZ / 8 \bbZ)$ which is (up to conjugation) generated by the following matrices\footnote{See \texttt{http://users.wfu.edu/rouseja/2adic/X187d.html}.}:
\[ G = \left\langle \left(\begin{array}{cc} 5 & 4 \\ 2 & 3 \end{array} \right), \left(\begin{array}{cc} 1 & 0 \\ 0 & 5 \end{array} \right), \left(\begin{array}{cc} 1 & 4 \\ 0 & 5 \end{array} \right), \left(\begin{array}{cc} 1 & 0 \\ 4 & 5 \end{array} \right) \right\rangle. \]
This group has order 16. In particular, $\rho_{E, 2}(G_\bbQ)$ is a pro-2 group, and if $p > 2$ then $\rho_{E, 2}(G_{\bbQ_\infty}) = \rho_{E, 2}(G_\bbQ)$ and hence $E(\bbQ_\infty)[2^\infty] = E(\bbQ)[2^\infty]$. If $p = 2$, then $\rho_{E, 2}(G_{\bbQ_\infty}) = \{ g \in \rho_{E, 2}(G_\bbQ) \mid \det(g)^2 = 1\}$. We calculate that the subgroup of $G$ consisting of matrices with determinant $\pm 1$ is given by
\[ H = \left\langle \left(\begin{array}{cc} 5 & 4 \\ 2 & 3 \end{array} \right), \left(\begin{array}{cc} 5 & 0 \\ 2 & 3 \end{array} \right), \left(\begin{array}{cc} 1 & 0 \\ 4 & 1 \end{array} \right) \right\rangle. \]
We have $(\bbZ / 8 \bbZ \times \bbZ / 8 \bbZ)^H = (\bbZ / 8 \bbZ \times \bbZ / 8 \bbZ)^G$, and this easily implies that $E(\bbQ_\infty)[2^\infty] = E(\bbQ)[2^\infty]$ in this case also. (We checked these group-theoretic calculations using Magma \cite{Bos97}.) Since the group $E(\bbQ_{\infty})^\text{tors}$ is the product of its $l$-primary components, we have now shown it to be equal to $E(\bbQ)$.
%For the other choice of curve E = 15A3, we look at http://users.wfu.edu/rouseja/2adic/X215c.html and find that the image, up to conjugacy, is the pre-image of the subgroup of $\GL_2(\bbZ / 16 \bbZ)$ generated by matrices
% \[ G = \left\langle \left(\begin{array}{cc} 1 & 0 \\ 8 & 5 \end{array} \right), \left(\begin{array}{cc} 7 & 14 \\ 8 & 1 \end{array} \right), \left(\begin{array}{cc} 7 & 0 \\ 8 & 1 \end{array} \right), \left(\begin{array}{cc} 5 & 0 \\ 0 & 1 \end{array} \right) \right\rangle. \]
%This group has order 256 and index 96 inside $\GL_2(\bbZ / 16 \bbZ)$. The subgroup of matrices with determinant $\pm 1$ is given by 
%\[ H = \left\langle \left(\begin{array}{cc} 5 & 4 \\ 2 & 3 \end{array} \right), \left(\begin{array}{cc} 5 & 0 \\ 2 & 3 \end{array} \right), \left(\begin{array}{cc} 1 & 0 \\ 4 & 1 \end{array} \right) \right\rangle. \]
%The group H has order 64 and we again check that $(\bbZ / 8 \bbZ \times \bbZ / 8 \bbZ)^H = (\bbZ / 8 \bbZ \times \bbZ / 8 \bbZ)^G$ and hence $E(\bbQ_\infty)[2^\infty] = E(\bbQ)[2^\infty]$. It is interesting to note that for this curve, we even have $E(\bbQ(\mu_{2^\infty}))[2^\infty] = E(\bbQ)$; this is not the case for the other curve 15A1, which acquires extra 2-torsion points over the full cyclotomic extension.

We now show that the group $E(\bbQ_{\infty})$ is finite, using results in Iwasawa theory. Since we already know $E(\bbQ_{\infty})^\text{tors} = E(\bbQ)$, this will prove the theorem. Calculations of Greenberg \cite[p. 136]{Gre99} in the case $p = 2$ show that the $2$-adic $\lambda$-invariant of $E$ is trivial, hence $E(\bbQ_{\infty})$ is finite. We can therefore assume that $p \geq 3$. It is known (cf. the tables in \cite{Cre97}) that the $L$-function of $E$ satisfies $L(E, 1) / \Omega_E = 1/8$. The product $\text{Tam}(E)$ of the Tamagawa numbers of $E$ is equal to $8$.

If $p \geq 3$ is a prime of good ordinary reduction for $E$, then by \cite[Proposition 3.8]{Gre99}, to show  $E(\bbQ_{\infty})$ is finite it is enough to show that $\Sel_p(E) = \Sha(E)[p]$ is trivial and $a_p \not \equiv 1 \text{ mod }p$. We have $a_p \not\equiv 1 \text{ mod }p$ if and only if $p$ divides the group $E(\bbF_p)$. If this happens then $8p$ divides $E(\bbF_p)$, which contradicts the Hasse bound. The triviality of $\Sel_p(E)$ follows from results of Kato and the fact that $L(E, 1) / \Omega_E$ is a $p$-adic unit, see \cite[Theorem 17.4]{Kat04} or \cite[Theorem 3.35]{Ski14} for a convenient reference. If $p$ is a prime of good supersingular reduction, then the finiteness of  $E(\bbQ_{\infty})$ follows from results of Kato and Kurihara \cite[Theorem 0.1]{Kur02} and the fact that $L(E, 1) / \Omega_E$ is a $p$-adic unit. It remains to treat the primes $p = 3, 5$ of bad reduction of $E$. In either case, it follows from \cite[Theorem C]{Ski15} and the fact that $L(E, 1) / \Omega_E$ is a $p$-adic unit that $\Sel_p(E)$ is trivial. The finiteness of $E(\bbQ_\infty)$ then follows from the discussion on \cite[pp. 92--93]{Gre99}, on noting that $E$ has non-split multiplicative reduction at $p = 3$, and split multiplicative reduction at $p = 5$, where the $\cL$-invariant $\frac{\log_p q_E}{\ord_p q_E}$ lies in $p \bbZ_p^\times$. This completes the proof.
\end{proof}

\def\cprime{$'$}


\begin{thebibliography}{BCDT01}

\bibitem[BCDT01]{Bre01}
Christophe Breuil, Brian Conrad, Fred Diamond, and Richard Taylor.
\newblock On the modularity of elliptic curves over {$\bold Q$}: wild 3-adic
  exercises.
\newblock {\em J. Amer. Math. Soc.}, 14(4):843--939 (electronic), 2001.

\bibitem[BCP97]{Bos97}
Wieb Bosma, John Cannon, and Catherine Playoust.
\newblock The {M}agma algebra system. {I}. {T}he user language.
\newblock {\em J. Symbolic Comput.}, 24(3-4):235--265, 1997.
\newblock Computational algebra and number theory (London, 1993).

\bibitem[CDT99]{Con99}
Brian Conrad, Fred Diamond, and Richard Taylor.
\newblock Modularity of certain potentially {B}arsotti-{T}ate {G}alois
  representations.
\newblock {\em J. Amer. Math. Soc.}, 12(2):521--567, 1999.

\bibitem[Cre97]{Cre97}
J.~E. Cremona.
\newblock {\em Algorithms for modular elliptic curves}.
\newblock Cambridge University Press, Cambridge, second edition, 1997.

\bibitem[FLHS]{Fre13}
Nuno Freitas, Bao~V. Le~Hung, and Samir Siksek.
\newblock Elliptic curves over real quadratic fields are modular.
\newblock Preprint. Available at {\tt http://arxiv.org/abs/1310.7088}.

\bibitem[Gre99]{Gre99}
Ralph Greenberg.
\newblock Iwasawa theory for elliptic curves.
\newblock In {\em Arithmetic theory of elliptic curves ({C}etraro, 1997)},
  volume 1716 of {\em Lecture Notes in Math.}, pages 51--144. Springer, Berlin,
  1999.

\bibitem[Kat04]{Kat04}
Kazuya Kato.
\newblock {$p$}-adic {H}odge theory and values of zeta functions of modular
  forms.
\newblock {\em Ast\'erisque}, (295):ix, 117--290, 2004.
\newblock Cohomologies $p$-adiques et applications arithm{\'e}tiques. III.

\bibitem[Kur02]{Kur02}
Masato Kurihara.
\newblock On the {T}ate {S}hafarevich groups over cyclotomic fields of an
  elliptic curve with supersingular reduction. {I}.
\newblock {\em Invent. Math.}, 149(1):195--224, 2002.

\bibitem[Lan80]{Lan80}
Robert~P. Langlands.
\newblock {\em Base change for {${\rm GL}(2)$}}, volume~96 of {\em Annals of
  Mathematics Studies}.
\newblock Princeton University Press, Princeton, N.J., 1980.

\bibitem[RZB]{Zur15}
Jeremy Rouse and David Zureick-Brown.
\newblock Elliptic curves over {$\mathbb{Q}$} and 2-adic images of {G}alois.
\newblock Preprint. Available at \texttt{http://arxiv.org/abs/1402.5997}.

\bibitem[Ser72]{Ser72}
Jean-Pierre Serre.
\newblock Propri\'et\'es galoisiennes des points d'ordre fini des courbes
  elliptiques.
\newblock {\em Invent. Math.}, 15(4):259--331, 1972.

\bibitem[Ski]{Ski15}
Christopher Skinner.
\newblock Multiplicative reduction and the cyclotomic main conjecture for
  {$\mathrm{GL}_2$}.
\newblock Preprint, avaiable at \texttt{http://arxiv.org/abs/1407.1093}.

\bibitem[SU14]{Ski14}
Christopher Skinner and Eric Urban.
\newblock The {I}wasawa main conjectures for {$\rm GL_2$}.
\newblock {\em Invent. Math.}, 195(1):1--277, 2014.

\bibitem[Tho]{Tho15}
Jack~A. Thorne.
\newblock Automorphy of some residually dihedral {G}alois representations.
\newblock To appear in Math. Annalen.

\bibitem[TW95]{Tay95}
Richard Taylor and Andrew Wiles.
\newblock Ring-theoretic properties of certain {H}ecke algebras.
\newblock {\em Ann. of Math. (2)}, 141(3):553--572, 1995.

\bibitem[Wil95]{Wil95}
Andrew Wiles.
\newblock Modular elliptic curves and {F}ermat's last theorem.
\newblock {\em Ann. of Math. (2)}, 141(3):443--551, 1995.

\bibitem[Zha01]{Zha01}
Shouwu Zhang.
\newblock Heights of {H}eegner points on {S}himura curves.
\newblock {\em Ann. of Math. (2)}, 153(1):27--147, 2001.

\end{thebibliography}
\end{document}